\pgfplotsset{compat=1.14}
\author{Hendrik Heine}
\title{Path spaces I: A Menger-type result}
\newtheorem{thm}{Theorem}[section]
\Crefname{thm}{Theorem}{Theorems}
\newtheorem{lem}[thm]{Lemma}
\Crefname{lem}{Lemma}{Lemmas}
\newtheorem{prop}[thm]{Proposition}
\Crefname{prop}{Proposition}{Propositions}
\Crefname{cor}{Corollary}{Corollaries} 
\Crefname{que}{Question}{Questions}
\Crefname{con}{Conjecture}{Conjectures}
\Crefname{clm}{Claim}{Claims}
\Crefname{goal}{Goal}{Goals}
\theoremstyle{definition}
\Crefname{defn}{Definition}{Definitions}
\newtheorem{ex}[thm]{Example}
\Crefname{ex}{Example}{Examples}
\Crefname{rem}{Remark}{Remarks}
\begin{document}
\begin{abstract}
Infinite graphs are finitary in the sense that their points are connected via finite paths. So what would an infinitary generalization of finite graphs look like? Usually this question is answered with the aid of topology, e.g. in the case of graph-like spaces. Here we introduce a more combinatorial answer, which we call path space, and prove a version of Menger's theorem for it. Since there are many topological path-like objects which induce path spaces, this result can be applied in a variety of settings.
\end{abstract}
\maketitle
\section{Introduction}
Paths in graphs, being the basic notion underlying connectivity, have been generalized in various ways, the most prominent being the notion of topological arcs. This connection has become more important in recent years with the study of topological infinite graphs. Considering an infinite graph as a $1$-complex with ends as points at infinity, topological arcs in this space can substitute for paths of finite graphs in a number of theorems. One of the original successes of this technique was the cycle space duality for locally finite graphs described by Diestel and Kühn in \cite{DK:Cycles}.
With this approach becoming more prolific, this also generated additional interest in a superclass of these spaces, so called graph-like spaces. These being topological spaces, arcs could be studied as before and in \Cite{TV:Menger} Thomassen and Vella investigated the extent to which Menger's theorem held in spaces like these. 

However, arcs do not necessarily capture all 'path-like' structures in these spaces. By arraying edges like $\omega_1$, one can construct a graph-like space called the long-line. Even though this space appears to consist of just a single path, there is no arc between its endpoints since $[0,1]$ has no uncountable ascending sequences. This can be resolved using a notion called pseudo-line introduced in \cite{BCC:InfGraphMatroids}, which can be born from any linear order.

Although not every pseudo-arc is an arc and not every Hausdorff space is graph-like many proofs for arcs in Hausdorff spaces and pseudo-arcs in graph-like spaces are parallel. This raises the question of whether there could be a further generalization of both which would allow us to prove theorems valid for any notion of paths sharing some essential properties.

In this article we will describe such a notion, which we call path spaces. These are sets of linear orders satisfying some compatibility axioms. By forgetting all information about the paths except their induced linear orders, we can forego the need for any setting for our paths to live in and define these spaces purely by the interactions of the paths themselves.

Path spaces can also be viewed as a different way to generalize the concept of finite graph to the infinite realm. While infinite graphs are still finitary, that is all paths are finite, path spaces allow for true infinitary connectivity. Indeed, graphs correspond exactly to those path spaces all whose paths are finite. In addition, many of the standard results about connectivity in graphs generalize to path spaces, as this series of articles will demonstrate.

The first theorem we will prove about path spaces will be a version of Menger's theorem. This being perhaps the fundamental theorem of graph connectivity and spawning a multitude of variants, strenghthenings and generalizations, including the breakthrough result in \cite{AB:Menger} by Aharoni and Berger, versions of it were already proven for many of the structures mentioned, including topological graphs in \cite{Diestel:EndMenger} and arcs in topological spaces (or graph-like spaces) e.g. in \cite{TV:Menger} and \cite{Gilbert:Menger}, but our version extends further, e.g. to long arcs.

Building hereupon, further articles (\cite{Heine:C2} and \cite{Heine:C3}) will consider decompositions of path spaces similar to tree decompositions and how these allow us to separate path spaces into blocks or $3$-connected parts. Furthermore, in \cite{Heine:Ubi} we will prove some ubiquity results for path spaces.

This article starts with a section defining path spaces and developing some basic notions and results. This is followed by a discussion of examples. Afterwards we observe some properties of alternating walks in path spaces and use them to prove the main result. The final section deals with counterexamples, in particular we give an example of a graph-like space which fails to satisfy Menger's theorem for cardinality $\aleph_0$.

\section{Path spaces}
Let us start by defining some preliminary notions to prepare the introduction of path spaces.
Call a subset $Y$ of a linearly ordered set $X$ \emph{complete}, if for any nonempty $Z \subseteq Y$ there exists a supremum and infimum in $X$ and these are contained in $Y$.
In this paper, a \emph{path} will be a complete linearly ordered set. A set of paths $\mathcal{P}$ is called \emph{compatible} if for any $P,Q \in \mathcal{P}$ the set $P \cap Q$ is complete in $P$. For a path $P$ with $x \leq y$ in $P$ we call the interval from $x$ and $y$ with the induced order its \emph{segment} from $x$ to $y$. Given two paths $P$ and $Q$ we say that $P$ \emph{connects to} $Q$, if  $P \cap Q = \{x\}$ where $x$ is the maximum of $P$ and the minimum of $Q$. If this is so, we call the union of $P$ and $Q$ with the induced order their \emph{concatenation}. The \emph{inverse} of a path is obtained by reversing its order.

Now we have enough to define path spaces. Since the bookkeeping involved in the proof of Menger's theorem is more natural for directed objects, we also introduce a directed version of path spaces. 
A set of paths is called a \emph{dipath space} if is compatible and closed under segments and concatenations. A dipath space is a \emph{path space} if it is also closed under inverses.
The \emph{ground set} $V(\mathcal{P})$ of a dipath space $\mathcal{P}$ is the union of the ground sets of its paths.

Let $\mathcal{P}$ be a set of paths. We can closed it under segments by just adding all segments of its paths and close it under concatenations by inductively adding in countably many steps all concatenations of paths constructed so far.
Now we define $\widehat{P}$ by closing under segments and then closing under concatenations.
\begin{lem}
Let $\mathcal{P}$ be a compatible set of paths. Then $\widehat{\mathcal{P}}$ is a dipath space.
\end{lem}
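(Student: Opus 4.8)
The plan is to verify the three defining properties of a dipath space for $\widehat{\mathcal{P}}$ separately: compatibility, closure under segments, and closure under concatenations. Write the construction as $\widehat{\mathcal{P}} = \bigcup_{n} \mathcal{Q}_n$, where $\mathcal{Q}_0$ is the segment-closure of $\mathcal{P}$ and $\mathcal{Q}_{n+1}$ is obtained from $\mathcal{Q}_n$ by adjoining all concatenations of pairs of its members. Then closure under concatenations is essentially immediate: any two members of $\widehat{\mathcal{P}}$ already appear together in some $\mathcal{Q}_n$, so their concatenation, when it is defined, lies in $\mathcal{Q}_{n+1} \subseteq \widehat{\mathcal{P}}$. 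The remaining two properties I would establish by showing that every stage $\mathcal{Q}_n$ is both compatible and closed under segments, and then passing to the union, since a segment of a member of $\mathcal{Q}_n$ stays in $\mathcal{Q}_n$ and any two members lie in a common stage.

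Two elementary facts about linear orders will carry the bookkeeping. First, a finite union of subsets each complete in a fixed linear order $X$ is again complete in $X$: the supremum in $X$ of a subset $Z$ of such a union is the maximum of the suprema of its finitely many pieces, and this maximum is attained by one of them, hence lies in the union, and infima are symmetric. Second, if $P$ is the initial segment of a concatenation $R = P \cdot Q$ up to the join point, then for any $Z \subseteq P$ the supremum and infimum computed in $P$ agree with those computed in $R$, so completeness of a subset of $P$ in $P$ upgrades to completeness in $R$, and dually for the final segment $Q$.

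With these in hand, closure under segments propagates through the stages: a segment of a concatenation $P \cdot Q$ either lies entirely in $P$, lies entirely in $Q$, or straddles the join point, in which case it is the concatenation of a final segment of $P$ with an initial segment of $Q$, and so reappears one stage later. For compatibility I would induct on $n$. Given that $\mathcal{Q}_n$ is compatible, consider a new concatenation $R = P \cdot Q$ and any other member $S$; writing $R \cap S = (P \cap S) \cup (Q \cap S)$, the first fact reduces completeness of $R \cap S$ in $R$ to completeness of each of $P \cap S$ in $P$ and $Q \cap S$ in $Q$, which the inductive hypothesis supplies and the second fact transfers up to $R$, and symmetrically one checks completeness in $S$. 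The case of two new concatenations then follows by applying this reduction once in each coordinate.

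I expect the genuine obstacle to sit in the base case, and implicitly in every step: controlling how the two linear orders of $P$ and $Q$ interact on the shared set $P \cap Q$. The reductions above are clean only because the supremum and infimum of a subset of $P \cap Q$ can be computed unambiguously, the same element being obtained whether one works in $P$ or in $Q$; without this, intersecting a complete set with a bounded interval taken from the other order need not stay complete, and the segment base case can already fail. Pinning down that compatible paths do induce the same order on their intersection, so that these suprema and infima are genuinely well defined, is the delicate heart of the argument; once it is secured, the segment base case and the concatenation induction become routine applications of the two facts above.
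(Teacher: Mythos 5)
Your overall architecture is the paper's: close under segments first, then add concatenations in stages, and propagate compatibility and segment-closure through the stages. The concatenation half of your argument is correct and is essentially the paper's computation: writing $R\cap S=(P\cap S)\cup(Q\cap S)$ and taking the maximum of the two suprema inside a single fixed order is exactly how the paper produces the supremum in $S$, and your second fact is how it produces the supremum in $R$. Note, though, that this step needs none of the order-agreement you worry about at the end: every supremum in your reduction is computed within one order at a time ($R$'s or $S$'s), and your second fact compares $P$ only with $R$, whose order restricts to $P$'s by construction. So the concatenation induction closes as it stands.

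The genuine gap is the claim you defer to as the ``delicate heart'': that compatible paths induce the same order on their intersection. This is not something to be ``pinned down'' --- it is false. A path and its inverse are compatible (their intersection is the whole ground set, which is complete in both) yet carry opposite orders; the compatibility axiom constrains only where suprema and infima land, not the orders themselves. Hence the route you announce for the segment base case is blocked, and since you yourself observe that without order-agreement ``the segment base case can already fail,'' that case is left genuinely open in your write-up. What the paper invokes there instead is only that an interval $[a,b]$ of $P$ is closed under $P$-suprema and $P$-infima, so that for $Z\subseteq P'\cap Q'$ the point $\sup_P Z$ still lies in the segment $P'$. The part you must still supply is the cross-condition: that this point also lies in the $Q$-interval $Q'$ (and, symmetrically, that $\sup_Q Z$ lies in $P'$). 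That is the one place where the two orders genuinely interact, and your proposal neither proves it nor can reach it by the argument you describe; any correct completion has to extract it from the completeness axiom directly rather than from a (false) order-agreement principle.
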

\begin{proof}
Let us first show that $\widehat{\mathcal{P}}$ is closed under segments. Let $Q$ be a segment of some $P \in \mathcal{P}$. Now $P$ is the concatenation of finitely many $P_1, \dots, P_n$ which are segments of paths in $\mathcal{P}$. We may assume w.l.o.g. that $Q$ meets all of these in a nontrivial segment, otherwise we may move to a shorter concatenation. Let $Q_1$ be the segment of $P_1$ which $Q$ meets and define $Q_n$ similary. Then $Q$ is the concatenation of $Q_1, P_2, \dots, P_{n-1}, Q_n$, which are all segments of paths in $\mathcal{P}$ and thus contained in $\hat{\mathcal{P}}$.

Now it remains to show that $\widehat{\mathcal{P}}$ is compatible. Since segments are closed under suprema and infima closing a compatible set under segments keeps it compatible.  Thus it suffices to prove that adding the concatenation of two paths from a compatible set to this set leaves it compatible. For this let $P_1, P_2, Q$ be three paths from a compatible set and let $P$ be the concatenation of $P_1$ and $P_2$. Let $Z \subseteq P \cap Q$ be nonempty. Clearly there is a supremum of $Z$ in $Q$ as required, namely the maximum of the suprema of $Z \cap P_1$ and $Z \cap P_2$ in $Q$. Furthermore if $Z$ does not meet $P_2$, then the supremum of $Z$ in $P_1$ is the supremum of $Z$ in $P$ and otherwise the supremum of $Z$ in $P_2$ is the supremum of $Z$ in $P$. For infima the proof proceeds analogously.
\end{proof}
This justifies calling $\widehat{\mathcal{P}}$ the \emph{induced dipath space} of $\mathcal{P}$.
If we close $\mathcal{P}$ under inverses before taking the induced dipath space then we obtain a path space $\overline{\mathcal{P}}$, which we call the \emph{induced path space} of $\mathcal{P}$.

Given a path space $\mathcal{P}$ we write $x \sim y$ for $x,y$ in its ground set if there exists some $P \in \mathcal{P}$ with minimum $x$ and maximum $y$.
\begin{lem}
$\sim$ is an equivalence relation.
\end{lem}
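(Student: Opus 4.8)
The plan is to check the three defining properties of an equivalence relation directly from the closure axioms of a path space, expecting reflexivity and symmetry to be immediate and transitivity to carry all the weight. Throughout I write $\leq_P$ for the order of a path $P$ and use that the witnessing paths for $\sim$ all lie in $\mathcal{P}$.

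For reflexivity, given $x \in V(\mathcal{P})$ I would pick any $P \in \mathcal{P}$ with $x \in V(P)$ and pass to its one-point segment $\{x\}$, which lies in $\mathcal{P}$ because dipath spaces are closed under segments; its minimum and maximum are both $x$, so $x \sim x$. For symmetry, if $x \sim y$ is witnessed by $P \in \mathcal{P}$ with minimum $x$ and maximum $y$, then the inverse $P^{-1}$ lies in $\mathcal{P}$ since a path space is closed under inverses, and it has minimum $y$ and maximum $x$, giving $y \sim x$.

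The substance is transitivity. Suppose $P$ witnesses $x \sim y$ (minimum $x$, maximum $y$) and $Q$ witnesses $y \sim z$ (minimum $y$, maximum $z$). The naive idea of forming the concatenation $P \cup Q$ fails, and this is the main obstacle: concatenation requires $P \cap Q = \{y\}$, whereas $P$ and $Q$ may share many points, and their orders on $P \cap Q$ need not even agree. To repair this I would truncate both paths to a carefully chosen common endpoint. Set $S := P \cap Q$, which is nonempty as $y \in S$. By compatibility $S$ is complete in $Q$, so (applying completeness to $Z = S$ itself) $S$ has a maximum $w$ with respect to $\leq_Q$; intuitively $w$ is the last point at which $Q$ meets $P$ on its way to $z$.

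Now I would take $P' := P[x,w]$, which is a legitimate segment since $x$ is the minimum of $P$ and $w \in P$, and $Q' := Q[w,z]$, a legitimate segment since $w \leq_Q z = \max Q$; both lie in $\mathcal{P}$ by closure under segments. The key verification is that $P'$ connects to $Q'$: its common endpoint is $w$, which is the maximum of $P'$ and the minimum of $Q'$, and $P' \cap Q' = \{w\}$ because every point of $Q'$ other than $w$ is strictly $>_Q w$, hence lies outside $S = P \cap Q$ by maximality of $w$, hence outside $P \supseteq P'$. Thus the concatenation $R := P' \cup Q'$ lies in $\mathcal{P}$ by closure under concatenations, and it has minimum $x$ and maximum $z$, so $x \sim z$. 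The only point demanding care is the choice of $w$ as the $\leq_Q$-maximum of the intersection rather than simply $y$, precisely to force the truncated pieces to meet in a single point despite the orders on $S$ being otherwise uncontrolled.
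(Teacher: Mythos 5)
Your proposal is correct and follows essentially the same route as the paper: take the maximum $w$ of $P \cap Q$ with respect to the order of $Q$ (guaranteed by compatibility/completeness), truncate $P$ up to $w$ and $Q$ from $w$, and concatenate. The paper's proof is just a terser version of yours; your extra verification that $P' \cap Q' = \{w\}$ is a welcome detail the paper leaves implicit.
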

\begin{proof}
It is clearly reflexive and symmetric, so let $x,y,z \in V(\mathcal{P})$ with $x \sim y$ and $y \sim z$ be given and let $P$ and $Q$ be paths witnessing this respectively. Since $P \cap Q$ is complete in $Q$, it has a maximum $m$. Let $P'$ be the segment of $P$ up to $m$ and $Q'$ the segment of $Q$ starting from $m$. Then $P'$ connects to $Q'$ and their concatenation witnesses $x \sim z$.
\end{proof}
We call the equivalence classes defined by $\sim$ \emph{components} of $\mathcal{P}$ and call $\mathcal{P}$ \emph{connected} if it has just one component.
In general dipath spaces we define components and connectedness via the induced path space.

We may delete a set of paths $\mathcal{Q}$ from a dipath space $\mathcal{P}$ by removing all those paths which share a nontrivial segment with a path from $\mathcal{Q}$. This also gives a dipath space.

Given a path space $\mathcal{P}$ and some $v \in V(\mathcal{P})$ we can define a relation on the nontrivial paths of $\mathcal{P}$ starting at $v$ where two paths are equivalent if there exists a nontrivial path starting at $v$ in $\mathcal{P}$ which is an initial segment of both. Clearly, this an equivalence relation. We call its classes the \emph{outdirections} at $v$. The number of these directions is the \emph{outdegree} of $v$. Similarly we can define \emph{indirections} and the \emph{indegree} of $v$.

Let $\mathcal{P}$ be a dipath space. A \emph{walk} in $\mathcal{P}$ is a map $f: P \rightarrow V(\mathcal{P})$ for some path $P$ not neccessarily from $\mathcal{P}$ such that for any $x \in P$ and small enough nontrivial closed interval $I$ in $P$ beginning or ending in $x$ we have $f[I] \in \mathcal{P}$. The set of paths of $\mathcal{P}$ which occurs as $f[I]$ for any closed interval $I$ is called the \emph{space induced by $f$}.

Note that even if $\mathcal{P}$ is the set of arcs of a Hausdorff space, say, its walks in this sense do not necessarily match the paths of the topological space. Indeed, this is impossible since there may be multiple spaces with different sets of paths, but the same set of arcs.\footnote{Indeed, one may construct a topological star either such that there is a topological path reaching each leaf and converging to the center or such that every path reaches only finitely many leafs.} However, the single notion of walk just defined will enough for our purposes.

A path space $\mathcal{P}$ is called \emph{finitary} if there exists a set of finitely many paths inducing $\mathcal{P}$. 
\begin{lem} \label{walkfin}
	For any walk $f$ in some dipath space $\mathcal{P}$ the space induced by $f$ in  $\mathcal{P}$ is finitary and connected.
\end{lem}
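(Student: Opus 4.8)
The plan is to exploit the defining, purely local property of a walk together with the compactness of its domain. Since every path is a complete linear order, the domain $P$ of the walk $f$ is compact in its order topology, and this is exactly what will convert the local condition on $f$ into a finite amount of global data. I would also use that membership $f[I] \in \mathcal{P}$ forces $f$ to be an order isomorphism from $I$ onto the path $f[I]$: otherwise the image would carry no well-defined order and could not be an element of $\mathcal{P}$.

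First I would fix, for each $x \in P$, a closed interval $U_x$ that is a neighbourhood of $x$ in $P$ with $f[U_x] \in \mathcal{P}$, obtained by gluing a small interval ending at $x$ to a small one beginning at $x$ and concatenating their images (at an endpoint of $P$ only one side is needed). The family $\{U_x : x \in P\}$ is an open cover of the compact space $P$, hence has a finite subcover $U_{x_1}, \dots, U_{x_n}$. I claim the finitely many paths $R_i := f[U_{x_i}]$ induce the space $\mathcal{Q}$ induced by $f$, which gives finitarity. Each $R_i$ lies in $\mathcal{Q}$, so $\widehat{\{R_1,\dots,R_n\}} \subseteq \widehat{\mathcal{Q}}$. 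For the reverse inclusion take any $f[I] \in \mathcal{Q}$ with $I = [a,b]$. Using the finite subcover on the compact interval $I$, a supremum argument produces points $a = t_0 < t_1 < \dots < t_k = b$ with each $[t_{j-1}, t_j]$ contained in some $U_{x_{i(j)}}$. Then $f[t_{j-1}, t_j]$ is a segment of $R_{i(j)}$, and since $f$ is injective on $I$ these images meet consecutively in the single points $f(t_j)$ and concatenate to $f[I]$. Hence $f[I] \in \widehat{\{R_1,\dots,R_n\}}$, so $\widehat{\mathcal{Q}} = \widehat{\{R_1,\dots,R_n\}}$ is finitary.

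For connectedness I would first observe that the ground set of $\mathcal{Q}$ equals $f[P]$, since every $f(x)$ lies in the image $f[U_x]$. Given two of its points $f(x), f(y)$ with $x \le y$ in $P$, I subdivide $[x,y]$ into finitely many pieces $[t_{j-1}, t_j]$ each lying in some $U_{x_{i(j)}}$, exactly as above. Each $f[t_{j-1}, t_j]$ is a path of $\mathcal{P}$ with endpoints $f(t_{j-1})$ and $f(t_j)$, so, using inverses when a piece is traversed downwards, $f(t_{j-1}) \sim f(t_j)$ in the induced path space. As $\sim$ is an equivalence relation by the earlier lemma, transitivity yields $f(x) \sim f(y)$; since $x,y$ were arbitrary, $\mathcal{Q}$ has a single component and is connected.

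The main obstacle is the step turning the finite subcover into the finite subdivision $a = t_0 < \dots < t_k = b$. Paths are only required to be complete, not densely ordered, so $P$ may have jumps and need not be topologically connected; I therefore cannot invoke connectedness and must run a supremum argument instead. Letting $S$ be the set of $t \in [a,b]$ for which $[a,t]$ already admits such a finite subdivision, I would show $\sup S \in S$ by completeness together with a cover member around $\sup S$, and then that $\sup S = b$, extending past any jump by choosing a cover member containing the successor point. The remaining delicate point, that the local images genuinely concatenate to $f[I]$, is handled by the observation that $f[I] \in \mathcal{P}$ already encodes local injectivity and order-preservation of $f$.
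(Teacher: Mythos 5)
Your route is genuinely different from the paper's and, up to one repairable step, it works. The paper runs a single supremum induction along the domain: it sets $c$ to be the supremum of all $x$ for which the space induced by $f\restriction_{[a,x]}$ is already finitary and connected, uses a small interval ending at $c$ to show the property persists at $c$ itself, and a small interval beginning at $c$ to push past it, forcing $c=b$. You instead observe that a complete linear order is compact in its order topology, extract a finite subcover from the local intervals witnessing the walk condition, and show that these finitely many local paths induce all of $\widehat{f}$ via a second supremum argument producing finite subdivisions of each interval. Your version is longer but buys an explicit finite inducing set consisting of local pieces of $f$, and it cleanly decouples finitarity from connectedness; the paper proves both at once in a few lines. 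Note also that your subdivision lemma is essentially the same supremum argument the paper runs globally, just localized to one interval at a time, so the two proofs ultimately lean on the same completeness of $P$.

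The step that fails as written is the construction of $U_x$: you glue an interval $I$ ending at $x$ to an interval $J$ beginning at $x$ and assert $f[U_x]=f[I\cup J]\in\mathcal{P}$ by concatenation. Concatenation is only defined when $f[I]\cap f[J]=\{f(x)\}$, and a walk need not be injective: in the dipath space generated by a directed digon on $\{u,v\}$, the walk $u,v,u$ has $f[I]=f[J]=\{u,v\}$ at the middle point, so $f[I\cup J]$ is not even a path. (That walks may revisit vertices is essential later in the paper, since alternating trails do exactly this on $V(\mathcal{P})$.) The repair is immediate: keep the two half-intervals $I_x$ and $J_x$ as separate cover elements --- their union is still a neighbourhood of $x$, so compactness still yields a finite subfamily --- refine your subdivision so that each piece lands in a single half-interval, and work with at most $2n$ paths instead of $n$. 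Nothing downstream changes.
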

\begin{proof}
	Let $f: P \rightarrow V(\mathcal{P})$ be a walk, let $a$ be the minimum and $b$ the maximum of $P$. Now we define $c$ to be the supremum of all $x$ such that the space induced by $f\restriction_{[a,x]}$ is finitary and connected. Let $I$ be a nontrivial closed interval ending at $c$ small enough that $f[I] \in \mathcal{P}$ and let $J$ be similar beginning at $c$. Now $f[I]$ clearly witnesses that the space induced by $f\restriction_{[a,c]}$ is connected. But it is also finitary, since $I$ contains some $x$ for which the space induced by $f\restriction_{[a,x]}$ is finitary. Thus $c$ is a maximum. This, however, implies that $c=b$, since otherwise any point of $J$ larger than $c$ would clearly witness that $c$ is not an upper bound.
\end{proof}
It follows that the space induced by $f$ is the same for any $\mathcal{P}$ in which $f$ is a walk. Thus we will just write $\widehat{f}$ for the space induced by $f$. A walk which has no two nontrivial segments which are have the same or the inverse image is called a \emph{trail}.
\section{Examples of path spaces}
In this section we will give various examples of (di)path spaces. Since the proofs are all straightforward and very similar, we will give just one and omit the rest.

Let $X$ be a Hausdorff space and $\Phi: [0,1] \rightarrow X$ an arc in $X$. Then $\Phi$ induces a path $P_\Phi$ on its image given by the standard ordering on $[0,1]$. Let $A(X)$ be the set of all these $P_\Phi$ for all arcs $\Phi$ in $X$ (together with the trivial path for each point).
\begin{lem} \label{toppathsys}
	$A(X)$ is a path space.
\end{lem}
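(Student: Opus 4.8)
The plan is to verify the four defining properties of a path space for $A(X)$: that it is compatible and closed under segments, concatenations and inverses. First observe that each $P_\Phi$ really is a path: since an arc $\Phi$ is a homeomorphism onto its image, it is in particular an order isomorphism from $[0,1]$ onto $P_\Phi$, and $[0,1]$ is complete, so completeness transfers to $P_\Phi$; the trivial paths are complete as well. Throughout, the two topological facts I expect to do all the real work are that the image of an arc is compact and hence closed in the Hausdorff space $X$, and that a continuous bijection from a compact space onto a subspace of a Hausdorff space is a homeomorphism.

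For compatibility, take arcs $\Phi, \Psi$ and consider $P_\Phi \cap P_\Psi$. Since $\Phi$ is an order isomorphism onto $P_\Phi$, this set is complete in $P_\Phi$ if and only if its preimage $\Phi^{-1}(P_\Phi \cap P_\Psi) = \Phi^{-1}(\mathrm{im}\,\Psi)$ is complete as a subset of $[0,1]$. I would then record the elementary fact that a subset of $[0,1]$ is complete in the sense of the paper precisely when it is topologically closed, i.e.\ when it contains the supremum and infimum of each of its nonempty subsets. Now $\mathrm{im}\,\Psi$ is compact and therefore closed in $X$, so its preimage under the continuous map $\Phi$ is closed in $[0,1]$, hence complete. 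This step, where the Hausdorff hypothesis enters through compactness, is the only one genuinely using the topology and is the crux of the lemma.

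The remaining closures are reparametrizations. For a segment of $P_\Phi$ from $\Phi(s)$ to $\Phi(t)$ with $s \le t$, the restriction $\Phi\restriction_{[s,t]}$, precomposed with an affine identification of $[0,1]$ with $[s,t]$, is again an arc whose induced path is exactly that segment (a single point when $s=t$, which is a trivial path, already in $A(X)$). For the inverse, precomposing $\Phi$ with $u \mapsto 1-u$ yields an arc realizing the reversed order. For concatenations, if $P_\Phi$ connects to $P_\Psi$ then $\Phi(1) = \Psi(0)$ and the two images meet only there; pasting $\Phi$ and $\Psi$ on the two halves of $[0,1]$ gives a continuous map by the pasting lemma, which is injective exactly because the images share only this common endpoint. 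Being a continuous bijection from the compact interval onto a subspace of the Hausdorff space $X$, it is a homeomorphism, hence an arc, and its induced path is the desired concatenation.

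I do not anticipate any serious obstacle: once the equivalence between completeness and topological closedness on $[0,1]$ is in place, compatibility is immediate from compactness, and the three closure conditions reduce to standard facts about arcs under restriction, reversal and pasting.
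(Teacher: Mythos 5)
Your proof is correct and takes essentially the same route as the paper: compatibility via compactness of arc images in the Hausdorff space plus the equivalence of closedness and completeness in $[0,1]$, and the three closure properties via reparametrization, reversal and pasting. You spell out a few steps the paper leaves implicit (that each $P_\Phi$ is itself a complete order, and that completeness is pulled back through the order isomorphism $\Phi$), but the substance is identical.
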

\begin{proof}
	Clearly $A(X)$ is closed under inverses.
	
	Let $P_\Phi \in A(X)$ be arbitrary and let $a,b$ be two points in the image of $f$, w.l.o.g. different. Then by scaling the interval between their preimages, we can obtain a new arc $\Psi$ between $a$ and $b$ such that $P_\Psi$ is a segment of $P_\Phi$.
	
	Let $P_\Phi$ and $P_\Psi$ in $A(X)$ be such that the maximal element of $P_\Phi$ is the minimal element of $P_\Psi$, but otherwise disjoint. By scaling and combining $\Phi$ and $\Psi$ we then obtain a new arc $\Lambda$, such that $P_\Lambda$ is the concatenation of $P_\Phi$ and $P_\Psi$.
	
	Now let $P_\Phi$ and $P_\Psi$ in $A(X)$ be arbitrary. Since the images of $\Phi$ and $\Psi$ are compact in $X$, so is their intersection. Since $X$ is Hausdorff, $P_\Phi \cap P_\Psi$ is closed. Because any closed set in $[0,1]$ is complete, so is $P_\Phi \cap P_\Psi$.
\end{proof}
In a Hausdorff space $X$ one could also consider the set of injective maps from any compact, connected LOTS to $X$ and obtain a path space the same way. 

Let $G$ be a graph-like space and $L$ a pseudo-line in $G$. Then $f$ together with an endvertex $v$ of $L$ induces a path $Q_L^v$ on $V(L)$ given by the order of $L$ with minimal element $v$. Let $P(G)$ be the set of all these $Q_L^v$. This is a path space.

Furthermore given some set $o$ of orientations of edges of $G$, we can define $P^o(G)$ as the set of all $Q_L^v$ such that the orientation towards $v$ of every edge of $L$ is contained in $o$. This is a dipath space.
As a corollary the set of paths in a graphs defines a path space and the set of dipaths in a digraph defines a dipath space. Conversely, given a (di)pathspace $\mathcal{P}$ with all paths finite one can extract a set of (directed) edges by taking segments and these define a (di)graph inducing  $\mathcal{P}$.

In both the case of LOTS and pseudo-lines one may choose some infinite cardinal as a maximum length for these and still obtain a (di)path space.

Let $k \in \{1, 2, \dots, \infty\}$. If $M$ is a differentiable manifold and $f$ an injective, piecewise $k$-times (continously) differentiable curve on $M$, let $Q_f$ be the image of $f$ with the order induced by $[0,1]$. Then the set of all these $Q_f$ forms a path space.

All the classes of path spaces considered so far are in some sense topological. We will now look at a path space which does not belong to any of them and is slightly pathological.
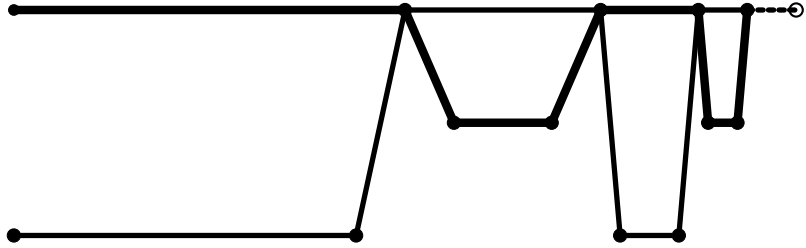
\begin{figure}
\centering
\begin{tikzpicture}[line cap=round,line join=round,>=triangle 45,x=1.3cm,y=1.5cm]
\clip(-0.5,-2.5) rectangle (8.5,0.5);
\draw [line width=3.2pt] (0,0)-- (4,0);
\draw [line width=2pt] (4,0)-- (6,0);
\draw [line width=3.2pt] (6,0)-- (7,0);
\draw [line width=2pt] (7,0)-- (7.5,0);
\draw [line width=3.2pt] (4,0)-- (4.5,-1);
\draw [line width=3.2pt] (4.5,-1)-- (5.5,-1);
\draw [line width=3.2pt] (5.5,-1)-- (6,0);
\draw [line width=2pt] (0,-2)-- (3.5,-2);
\draw [line width=2pt] (3.5,-2)-- (4,0);
\draw [line width=2pt,dotted] (7.5,0)-- (8,0);
\draw [line width=2pt] (6,0)-- (6.2,-2);
\draw [line width=2pt] (6.2,-2)-- (6.8,-2);
\draw [line width=2pt] (6.8,-2)-- (7,0);
\draw [line width=3.2pt] (7,0)-- (7.1,-1);
\draw [line width=3.2pt] (7.1,-1)-- (7.4,-1);
\draw [line width=3.2pt] (7.4,-1)-- (7.5,0);
\begin{scriptsize}
\draw [fill=black] (0,0) circle (2pt);
\draw [fill=black] (4,0) circle (2.5pt);
\draw [fill=black] (6,0) circle (2.5pt);
\draw [fill=black] (7,0) circle (2.5pt);
\draw [fill=black] (7.5,0) circle (2.5pt);
\draw [fill=black] (4.5,-1) circle (2.5pt);
\draw [fill=black] (5.5,-1) circle (2.5pt);
\draw [fill=black] (0,-2) circle (2.5pt);
\draw [fill=black] (3.5,-2) circle (2.5pt);
\draw [color=black,thick] (8,0) circle (2.5pt);
\draw [fill=black] (6.2,-2) circle (2.5pt);
\draw [fill=black] (6.8,-2) circle (2.5pt);
\draw [fill=black] (7.1,-1) circle (2.5pt);
\draw [fill=black] (7.4,-1) circle (2.5pt);
\end{scriptsize}
\end{tikzpicture}
\label{fig:exspace}
\caption{The space from \cref{exspace}. The hollow circle signifies that this point does not form a path with the top ray.}
\end{figure}
\begin{ex} \label{exspace}
Let $X = [0,1] \times \{0,1,2\} \setminus \{(1,0)\}$. Let $\sim$ be the relation on $X$ with $(x,a) \sim (y,b)$ if $x=y$ and one of the following conditions holds:
\begin{enumerate}
	\item{$a=b$}
	\item{$\{a,b\} = \{0,1\}$ and $x \in (1-\frac{1}{2^n},1-\frac{1}{2^{n+1}})$ for $n$ even}
	\item{$\{a,b\} = \{0,2\}$ and $x \in (1-\frac{1}{2^n},1-\frac{1}{2^{n+1}})$ for $n$ odd}
	\item{$x = 1-\frac{1}{2^n}$ for some $n$}
	\item{$x=1$}
\end{enumerate} Note that $\sim$ is an equivalence relation. Let $X'$ be its set of equivalence classes and $f: X \rightarrow X'$ the natural surjection. Let $Y$ consist of all paths of the form $[0,x] \times \{0\}$ for $x \in [0,1)$ with the obvious order together with $[0,1] \times \{1\}$ and $[0,1] \times \{2\}$ and let $Y'$ be the set of images of all these paths under $f$. Then $Y'$ is a compatible set of paths and so $\mathcal{P} = \overline{Y'}$ is a path space.
\end{ex}
\section{Menger's theorem}
In this section we prove our main result, namely Menger's theorem for dipath spaces. We closely follow the augmenting paths proof of Menger's theorem in \cite{Diestel:GT}. 

Perhaps surprisingly, the more difficult part of this proof in which the separator is constructed can be generalized with basically no extra work.
In order to emulate the argument that alternating paths actually augment the path system, however, we need to control the components of the relevant symmetric difference.

This will be accomplished by showing that these dipath spaces are \emph{rayless}, that is for any ascending sequence $R$ of paths there exists a finite set of paths whose union contains a final segment of the union of $R$.

The following lemma demonstrates that this will suffice.

\begin{lem} \label{pathcircuit}
	Let $\mathcal{P}$ be a connected, rayless dipath space with every indegree and outdegree at most one. Then $\mathcal{P}$ is a dipath or a directed circuit.
\end{lem}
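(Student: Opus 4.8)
The plan is to follow the unique in- and out-continuations guaranteed by the degree hypotheses and to assemble them, via transfinite recursion, into a single maximal path that exhausts the ground set. First I would record the local rigidity that the hypotheses buy us: since the outdegree of every vertex is at most one, any two nontrivial paths sharing their minimum $v$ agree on a common nontrivial initial germ, and symmetrically for the indegree and terminal germs. Combined with compatibility (so that $P \cap Q$ is complete, hence has a maximum) this should upgrade to the statement that any two paths of $\mathcal{P}$ are order-consistent on their overlap, so that the paths fit together into one coherent linear order on $V(\mathcal{P})$. This is the analogue of the observation that a graph of maximum degree two is locally a path.

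The engine of the construction is a limit lemma extracted from raylessness. Given an ascending sequence $R = (R_\alpha)$ of paths, raylessness provides finitely many paths $Q_1, \dots, Q_k$ whose union contains a final segment of $\bigcup R$; since $\mathcal{P}$ is closed under concatenations and the $Q_i$ are pairwise compatible, the degree conditions let me glue $R_0, Q_1, \dots, Q_k$ into a single path of $\mathcal{P}$, and $\bigcup R$ is then a complete subset of it, hence a segment and so itself a member of $\mathcal{P}$. With ascending unions thus available inside $\mathcal{P}$, I would apply Zorn's lemma to the poset of paths ordered by ``is a segment of'', using the limit lemma to bound chains, and obtain a path $P^{*}$ admitting no proper forward or backward extension.

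It then remains to identify $P^{*}$. Connectedness forces $V(P^{*}) = V(\mathcal{P})$: any vertex outside would be joined to $P^{*}$ by a path, whose first entry into $P^{*}$ would exhibit either a new out-direction at some vertex of $P^{*}$ (contradicting outdegree $\le 1$ together with maximality) or a new in-direction, again impossible. The two ends of $P^{*}$ then determine the case distinction. If the maximum $b$ of $P^{*}$ (should it exist) has outdegree $0$, or $P^{*}$ has no maximum at all, and symmetrically at the bottom, then $\mathcal{P} = \widehat{\{P^{*}\}}$ is a dipath. Otherwise $b$ has outdegree $1$; by maximality its unique out-continuation cannot leave $V(P^{*})$, so it re-enters $P^{*}$, and since every vertex of $P^{*}$ except the minimum $a$ already uses up its single in-direction via its predecessor in $P^{*}$, the continuation can only arrive at $a$. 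Thus $P^{*}$ closes up and $\mathcal{P}$ is a directed circuit.

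The step I expect to be the main obstacle is the limit lemma together with ruling out runaway behaviour: this is precisely where raylessness does its work, excluding one-way rays and bi-infinite double rays (which satisfy all the degree and connectedness hypotheses yet are neither a dipath nor a circuit), and it is also the place where I must be careful that the glued finite family realises $\bigcup R$ as a genuine segment in $\mathcal{P}$ rather than merely covering it as a set. A secondary delicate point is the closing-up argument for the circuit, where the consistency of orders on overlaps and the bookkeeping of which in-direction each vertex has already committed to must be handled rigorously.
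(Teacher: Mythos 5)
Your proposal is correct and follows essentially the same route as the paper: use the degree conditions to glue the finitely many paths provided by raylessness into a single path bounding any ascending chain, apply Zorn's lemma to get a maximal path, and then argue that nothing can attach to it except by closing it into a circuit. The only organizational difference is that the paper dispatches the circuit case at the outset (a dipath space containing a directed circuit must \emph{be} that circuit, by the degree bounds) rather than deriving it from the closing-up of the maximal path as you do.
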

\begin{proof}
	If $\mathcal{P}$ contains a directed circuit, then it must actually be a directed circuit, since any other path meeting it would increase the degree at some point. Thus we may assume that it does not. Let $R$ be an ascending sequence of paths in $\mathcal{P}$. Since $\mathcal{P}$ is rayless, there exist $P_1, \dots, P_k \in \mathcal{P}$ containing a final segment of the union of $R$. Choose these to minimize $k$.  We claim that $k \leq 1$. Indeed, by the degree condition no $P_i$ can leave $R$ in an inner point, so given paths $P_1$ and $P_2$ meeting $R$ we can obtain a path from their union by connecting them via segments of $R$, if neccessary. Thus we may find a maximal path $P$ in $\mathcal{P}$ by Zorn's Lemma. Now any path that is not a segment of $P$  can only attach at exactly one endpoint of $P$. This, however, would contradict the maximality of $P$.
\end{proof}

Given two dipath spaces $\mathcal{P}$ and $\mathcal{Q}$, we write $\mathcal{P} \triangle \mathcal{Q}$ for the space obtained from $\widehat{\mathcal{P} \cup \mathcal{Q}}$ by deleting $\mathcal{P} \cap \mathcal{Q}$.

For the rest of this section, fix a dipath space $\mathcal{G}$, sets $A,B \subseteq V(\mathcal{G})$ and a finite set $\mathcal{P}$ of disjoint $A$-$B$-paths in $\mathcal{G}$.

Call a trail $f: P \rightarrow \overline{\mathcal{G}}$ \emph{alternating} (with respect to $\mathcal{P}$) if it satisfies the following conditions:
\begin{enumerate}
	\item{It starts in $A \setminus V(\mathcal{P})$}.
	\item{$f$ has no nontrivial segment in $\mathcal{P}$ or in the inverse of any path not sharing a segment with $\mathcal{P}$.}
	\item{$|f^{-1}(v)| \leq 1$ for any $v \notin V(\mathcal{P}$.)}
	\item{For any $v \in V(\mathcal{P}) \cap V(\widehat{f})$ which is not the final vertex of $f$ there exists some nontrivial segment of $f$ containing $v$ whose inverse is contained in $\mathcal{P}$.}
	\item{Every path of $\mathcal{P}$ meets $\hat{f}$ in only finitely many segments.}
\end{enumerate}
\begin{lem} \label{symdiff}
Let $f$ be a trail alternating with respect to $\mathcal{P}$.
Then the space $\hat{\mathcal{P}} \triangle \hat{f}$ is rayless.
\end{lem}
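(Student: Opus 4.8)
The plan is to exploit the finiteness already present in both constituents of the symmetric difference. By \cref{walkfin} the space $\widehat f$ is finitary, hence induced by finitely many paths; together with the finite set $\mathcal P$ this produces a finite set $\mathcal S$ of paths inducing $\widehat{\widehat{\mathcal P}\cup\widehat f}$ and therefore also its subspace $\mathcal D := \widehat{\mathcal P}\triangle\widehat f$. Thus every path of $\mathcal D$ is a finite concatenation of segments of members of $\mathcal S$. Calling the maximal subsegments of a path that lie in a single member of $\mathcal S$ its \emph{blocks}, and the points where consecutive blocks meet its \emph{transitions}, the whole statement reduces to showing that along any ascending sequence of paths of $\mathcal D$ the transitions are not cofinal in the union.

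I would argue by contradiction. Suppose $(R_\alpha)$ is an ascending sequence in $\mathcal D$, each $R_\alpha$ an initial segment of its successors, whose union $U$ admits no final segment covered by finitely many paths of $\mathcal D$. Since the $R_\alpha$ are nested their blocks cohere, so $U$ inherits a decomposition into blocks separated by transitions, and the hypothesis says precisely that these transitions are cofinal in $U$. As $\mathcal S$ is finite, the pigeonhole principle yields a single generator $S\in\mathcal S$ contributing cofinally many blocks of $U$. Because paths are self-avoiding, these blocks are pairwise disjoint subintervals of $S$; between two consecutive ones $U$ must leave $S$ and return, and each departure and return point is a transition lying on $S$.

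It remains to bound the transitions lying on a fixed generator $S$. If $S$ is a path of $\mathcal P$, then, the paths of $\mathcal P$ being pairwise disjoint, $U$ can only leave or rejoin $S$ along $\widehat f$; hence every transition on $S$ is an endpoint of a segment of $S\cap\widehat f$, of which there are finitely many by condition~(5). So $S$ carries only finitely many transitions and cannot supply cofinally many blocks, a contradiction. If instead $S$ comes from $\widehat f$, a transition on $S$ sits either at a vertex of $V(\mathcal P)$ or at one outside it; the former are finite in number, again by condition~(5) applied to each of the finitely many members of $\mathcal P$, and the latter are constrained by conditions~(2) and~(3) and by $f$ being a trail, which forbid $f$ from meeting such a vertex twice and hence from re-entering $S$ cofinally often. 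Either way $S$ supplies only finitely many blocks, the required contradiction. Consequently the transitions of $U$ are bounded, a final segment of $U$ lies inside a single generator $S$, which is itself a path of $\mathcal D$, so $\{S\}$ covers it; hence $\mathcal D$ is rayless.

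The genuinely delicate point is the second case, where $S$ stems from $\widehat f$ and the transitions accumulate at a vertex outside $V(\mathcal P)$: one must rule out that $U$ escapes while staying within the trail itself. Here I would use completeness of the paths involved, so that a cofinal family of transitions has a supremum $c$ on $S$, near which a sufficiently small interval of $f$ already maps to a single path of $\mathcal G$ and thus to one generator covering the tail, together with the trail condition and condition~(3), which prevent infinitely many excursions from clustering below $c$. Carrying out this accumulation argument while simultaneously tracking directions, so that what look like excursions are genuine departures from $S$ rather than mere reorientations, is where the real bookkeeping of the proof will reside.
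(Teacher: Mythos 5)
Your overall strategy is the right one and shares the paper's key ingredients --- finitarity of $\widehat{f}$ via \cref{walkfin}, finiteness of $\mathcal{P}$, and condition~(5) to control how the ascending union can switch between generators --- but as written the argument has two genuine gaps. The first is the one you yourself flag at the end: the case of a generator $S$ coming from $\widehat{f}$ with transitions accumulating at vertices outside $V(\mathcal{P})$ is not actually carried out, only sketched as ``bookkeeping''. This is not incidental: your block/transition decomposition is not well defined when generators overlap (consecutive generators of $\widehat{f}$ share points, and generators of $\widehat{f}$ overlap paths of $\mathcal{P}$ along the reversed segments forced by condition~(4)), so ``the transitions lying on $S$'' is exactly where the argument needs the most care and currently has the least. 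The difficulty arises because you are aiming for a conclusion stronger than raylessness requires, namely that a final segment of $U$ lies in a \emph{single} generator. The paper's proof avoids this entirely: raylessness only asks for a \emph{finite} set of covering paths, so it suffices to use condition~(5) once, to conclude that a final segment of $R$ lies wholly in $\widehat{\mathcal{P}}$ or wholly in $\widehat{f}$, and then invoke finiteness of $\mathcal{P}$ (respectively finitarity of $\widehat{f}$) to produce the finite cover --- no analysis of transitions \emph{within} $\widehat{f}$ is ever needed.

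The second gap is your closing assertion that $S$ ``is itself a path of $\mathcal{D}$''. The space $\widehat{\mathcal{P}}\triangle\widehat{f}$ is obtained by \emph{deleting} $\widehat{\mathcal{P}}\cap\widehat{f}$, i.e.\ by removing every path sharing a nontrivial segment with that intersection; a generator of $\widehat{\widehat{\mathcal{P}}\cup\widehat{f}}$ will in general share such a segment and hence not survive. You must therefore pass to suitable final segments of your covering paths and argue that these have no nontrivial segment in the other space --- which is precisely what the paper does with its ``w.l.o.g.\ $P$ with its final point deleted also meets $R$ cofinally'' step, using the fact that the paths of $R$ themselves lie in $X$ and so avoid the overlap. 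Without this step the finite set you produce need not consist of paths of the space you are trying to prove rayless.
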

\begin{proof}
Let $R=(P_\alpha)_{\alpha < \beta}$ be an ascending sequence of paths in $X=\hat{\mathcal{P}} \triangle \hat{f}$. We will show that there  exists some finite set of paths in $X$ whose union contains a final segment of $R$.
By the last condition for alternating trails we may assume w.l.o.g. that $R$ is completely contained in $\hat{\mathcal{P}}$ or $\hat{f}$.

 First assume that it is contained in $\hat{\mathcal{P}}$. Then there must exist a path $P \in \hat{\mathcal{P}}$ which contains a final segment of $R$, w.l.o.g.  $P$ with its final point deleted also meets $R$ cofinally. Since $R$ is contained in $\hat{\mathcal{P}}$, a final segment of $P$ has no segment in $\hat{f}$ and is thus contained in $X$.
 
 Now assume that $R$ is contained in $\hat{f}$. Since $\hat{f}$ is finitary, in particular there exist $P_1, \dots, P_k$ inducing a final segment $R$ in $\hat{f}$, w.l.o.g. $R$ meets each $P_i$ with its final point deleted cofinally. As before, final segments of each $P_i$ have no segment in $\hat{\mathcal{P}}$ and thus they are all contained in $X$.
\end{proof}
\begin{prop} \label{augment}
	If there is an alternating trail ending in $B \setminus V(\mathcal{P})$, there exists a set of disjoint $A$-$B$-paths $\mathcal{Q}$ with $|\mathcal{Q}|>|\mathcal{P}|$.
\end{prop}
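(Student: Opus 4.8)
The plan is to adapt the classical augmenting-path argument: form the symmetric difference of the current path system with the augmenting trail, show that it decomposes into dipaths and circuits, and read off a larger system of $A$-$B$-paths from its dipath components. Concretely, I would set $X = \hat{\mathcal{P}} \triangle \hat{f}$ and first record that $X$ is rayless by \cref{symdiff}. The content of conditions (2) and (4) is that $f$ traverses paths of $\mathcal{P}$ only backwards and all other segments forwards, so that passing to $X$ cancels exactly those segments of $\mathcal{P}$ that $f$ runs through in reverse while retaining the remaining $\mathcal{P}$-segments together with the forward segments of $f$. Condition (2) also guarantees that the forward segments of $f$ are genuine paths of $\mathcal{G}$ (not merely of $\overline{\mathcal{G}}$), so that every path of $X$ in fact lies in $\mathcal{G}$; this is what will make the resulting paths admissible.

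The heart of the argument is to verify that every vertex of $X$ has indegree and outdegree at most one. At a vertex outside $V(\mathcal{P})$ this is immediate from condition (3), which lets $f$ pass through at most once, together with the fact that such a vertex carries no $\mathcal{P}$-segment. At a vertex $v \in V(\mathcal{P})$ one has to combine the at most one incoming and one outgoing $\mathcal{P}$-segment at $v$ with the forward and backward passages of $f$: each backward passage cancels one of the two $\mathcal{P}$-segments at $v$, while the trail property and condition (4) prevent $f$ from contributing a second incoming or a second outgoing segment. Carrying out this bookkeeping carefully, so that the \emph{turning} vertices, where $f$ switches between its $\mathcal{P}$- and non-$\mathcal{P}$-behaviour, are handled correctly, is the step I expect to be the main obstacle.

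Once the degree bound is in place, each component of $X$ is connected and has all degrees at most one, and it is rayless because a finite cofinal cover of an ascending sequence, being forced to meet that sequence, lies in the same component. Hence by \cref{pathcircuit} each component is a dipath or a directed circuit. It then remains to count the dipaths running from $A$ to $B$. For this I would use a flow bookkeeping: writing $\mathrm{ex}(v)$ for outdegree minus indegree in $X$, the cancellation described above yields $\mathrm{ex} = \mathrm{ex}_{\hat{\mathcal{P}}} + \mathrm{ex}_{f}$, where $\mathrm{ex}_{\hat{\mathcal{P}}}$ equals $+1$ at the $A$-ends of the paths of $\mathcal{P}$, $-1$ at their $B$-ends and $0$ otherwise, while $\mathrm{ex}_{f}$ equals $+1$ at the start of $f$, $-1$ at its end and $0$ otherwise.

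Since the start and end of $f$ avoid $V(\mathcal{P})$, the vertices of positive excess are exactly the $A$-ends of the paths of $\mathcal{P}$ together with the start of $f$, and by the degree bound these are precisely the sources (indegree $0$, outdegree $1$) of $X$; symmetrically the sinks are the $B$-ends together with the end of $f$. Each dipath component thus has exactly one source and one sink and so runs from $A$ to $B$, whereas circuits contain neither; truncating at the last vertex in $A$ and the first subsequent vertex in $B$ if necessary produces an $A$-$B$-path in $\mathcal{G}$. As distinct components are vertex-disjoint, these paths are disjoint, and there is exactly one per source, giving a family $\mathcal{Q}$ with $|\mathcal{Q}| = |\mathcal{P}| + 1 > |\mathcal{P}|$, as required.
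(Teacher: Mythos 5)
Your proposal follows essentially the same route as the paper's proof: take the symmetric difference $\hat{\mathcal{P}} \triangle \hat{f}$, invoke \cref{symdiff} (and \cref{walkfin}) for raylessness, verify the indegree/outdegree bound, apply \cref{pathcircuit} to the components, and count sources and sinks to get one extra path. The paper simply asserts the degree and source/sink facts that you elaborate via the excess bookkeeping, so your version is, if anything, slightly more detailed at the step you flag as the main obstacle.
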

\begin{proof}
	Let $T=\widehat{f} \triangle \widehat{\mathcal{P}}$.
	By \cref{walkfin,symdiff} $T$ is rayless. Let $A'$ be the set of initial points of $f$ and the paths of $\mathcal{P}$ and $B'$ their set of final points. Now define $\mathcal{Q}$ to be the set of components of $T$ meeting $A' \cup B'$.
	
	Note that $T$ has maximum indegree and outdegree one, exactly the points of $A'$ have indegree zero and exactly the points of $B'$ have outdegree zero. By \cref{pathcircuit} any element of $\mathcal{Q}$ is then a path starting in $A'$ and ending in $B'$. In particular, $|\mathcal{Q}| > |\mathcal{P}|$.
\end{proof}
\begin{prop} \label{separator}
	If there is no alternating trail ending in $B \setminus V(\mathcal{P})$, there exists a choice of one point from each element of $\mathcal{P}$ which meets every $A$-$B$-path.
\end{prop}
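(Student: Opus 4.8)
The plan is to mimic the König-type construction from the augmenting-path proof: use the absence of an augmenting trail to cut each path of $\mathcal{P}$ at the \emph{frontier} of the vertices reachable by alternating trails. Concretely, let $W$ be the set of all $v \in V(\mathcal{G})$ that occur as the final vertex of some alternating trail with respect to $\mathcal{P}$; by hypothesis $W \cap (B \setminus V(\mathcal{P})) = \emptyset$. The key structural observation is that for each $P \in \mathcal{P}$ the set $W \cap V(P)$ is an \emph{initial} segment of $P$: conditions (2) and (4) force an alternating trail to traverse paths of $\mathcal{P}$ only backwards, so once a trail reaches a vertex of $P$ it may be prolonged backwards along $P$, making every earlier vertex of $P$ reachable as well. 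I would then set $s_P := \sup_P(W \cap V(P))$ when this set is nonempty --- the supremum existing and lying in $P$ precisely because paths are complete --- and $s_P := \min P$ otherwise, and take $S := \{s_P : P \in \mathcal{P}\}$. This already yields one point from each element of $\mathcal{P}$.

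It then remains to show that $S$ meets every $A$-$B$-path. Suppose for contradiction that some $A$-$B$-path $Q$ avoids $S$; I would derive a contradiction by proving that every vertex of $Q$ lies in $W$. First the base point: writing $a_Q = \min Q$, if $a_Q \notin V(\mathcal{P})$ then $a_Q \in A \setminus V(\mathcal{P})$ is reachable by the trivial trail, while if $a_Q = \min P'$ for the path $P' \in \mathcal{P}$ through $a_Q$, then $a_Q \notin S$ forces $W \cap V(P') \neq \emptyset$ (otherwise $s_{P'} = \min P' = a_Q \in S$), whence $a_Q \in W$ by the initial-segment property. To propagate reachability I would consider $c := \sup\{x \in Q : [\min Q, x]_Q \subseteq W\}$, again available by completeness, and argue that $[\min Q, c]_Q \subseteq W$ and $c = \max Q$. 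If $c$ were not the maximum, I would inspect a short segment of $Q$ just beyond $c$: on a piece lying off $\mathcal{P}$ one prolongs an alternating trail ending at $c$ forwards to reach points past $c$; on a piece running along some $P' \in \mathcal{P}$ one uses that $c \in W \cap V(P')$ together with $c \neq s_{P'}$ (as $Q$ avoids $S$), hence $c <_{P'} s_{P'}$, so that the initial segment $W \cap V(P')$ already contains points of $Q$ beyond $c$. Either way $c$ fails to be the supremum, a contradiction. Once $[\min Q, \max Q]_Q \subseteq W$, the endpoint $b_Q = \max Q \in B$ lies in $W$; if $b_Q \in B \setminus V(\mathcal{P})$ this contradicts the hypothesis, and if $b_Q \in V(\mathcal{P})$ then $b_Q$ is the maximum of its path, so $b_Q = s_{P'} \in S$, contradicting $Q \cap S = \emptyset$.

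The hard part will be the passage to limits hidden in the two suprema. Showing that $c$ itself belongs to $W$ --- rather than merely being approached by reachable points --- requires assembling the trails reaching points below $c$ into a single alternating trail ending at $c$, exactly the kind of supremum argument used in \cref{walkfin}; controlling it will again rely on completeness together with the rayless behaviour furnished by \cref{symdiff}. A related difficulty is that near a limit point $Q$ may oscillate between $\mathcal{P}$ and its complement, so I must first isolate a segment of $Q$ abutting $c$ that is of a single ``type'' before applying the forward-extension or the initial-segment argument; finiteness of $\mathcal{P}$ should make this possible. Finally, each forward prolongation of a trail must be checked against conditions (3)--(5) and the no-repeated-segment requirement, and conflicts where $Q$ re-crosses the current trail will have to be removed by the usual shortcutting, choosing the trails to $c$ as economically as possible. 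I expect these limit and bookkeeping steps, rather than the construction of $S$ itself, to be where the real work lies.
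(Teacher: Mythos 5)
Your separator is the right one (your $s_P$ is the paper's $x_P$), and your ``initial-segment'' observation --- that reachability by alternating trails is downward closed along each $P \in \mathcal{P}$, because a trail ending at $z \in P$ can be truncated at its first meeting with $[y,z]_P$ and then prolonged backwards along $P$ to any earlier $y$ --- is exactly the key move of the paper's proof. The problem is the route from there to the contradiction. You propose to show that \emph{every} vertex of a putative $A$-$B$-path $Q$ avoiding your point set lies in $W$, propagating forwards along $Q$ and handling limits via $c = \sup\{x : [\min Q, x]_Q \subseteq W\}$. This forces you through possibly transfinitely many crossings of $Q$ with $V(\mathcal{P})$, and the limit step you flag as ``the real work'' is a genuine gap, not bookkeeping: if $Q$ meets some $P' \in \mathcal{P}$ in infinitely many segments accumulating at $c$ (which compatibility permits), the separate trails witnessing $[\min Q, c)_Q \subseteq W$ need not cohere into anything, and the natural single candidate --- $Q$ up to $c$ with a backward detour along $P'$ inserted at each crossing, as condition (4) demands --- will in general meet $P'$ in infinitely many segments, violating condition (5). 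Neither \cref{walkfin} nor \cref{symdiff} rescues this; they concern one given walk, not the assembly of infinitely many distinct trails. A secondary slip: when $c \in V(P')$ and $Q$ leaves $\mathcal{P}$ right after $c$, you cannot prolong an \emph{arbitrary} trail ending at $c$ forwards along $Q$, since condition (4) then requires a nontrivial backward $P'$-segment through $c$; you must first re-route the trail to arrive at $c$ from above along $P'$ (possible because $c <_{P'} s_{P'}$, but it has to be arranged).

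The paper's proof avoids transfinite propagation entirely. Let $S'$ be the union over $P \in \mathcal{P}$ of the segments $[\min P, x_P]_P$ and let $y$ be the \emph{last} point of $Q$ in $S'$, which exists by compatibility and finiteness of $\mathcal{P}$ (and $Q$ does meet $S'$, since otherwise an initial piece of $Q$ would be an alternating trail reaching beyond some $x_P$ or reaching $B \setminus V(\mathcal{P})$). One then builds a \emph{single} trail $f'$ ending at $y$ that arrives backwards along its path $R$ of $\mathcal{P}$, using $y <_R x_R$, and appends the entire tail of $Q$ after $y$ in one step, shortcutting a possible intersection outside $V(\mathcal{P})$. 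Because that tail meets $V(\mathcal{P})$ only in $V(\mathcal{P}) \setminus S'$ (apart from $y$ itself), the \emph{first} such meeting point, if any, is the end of an alternating trail lying strictly beyond some $x_P$, contradicting the definition of $x_P$; if there is none, the whole object is an alternating trail ending in $B \setminus V(\mathcal{P})$, contradicting the hypothesis. Reworking your argument from the last point of $Q$ in $S'$ rather than from its first point removes the limit step altogether.
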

\begin{proof}
	For every $P \in \mathcal{P}$ let $x_P$ be the supremum in $P$ of all points $v$ such that there exists an alternating trail ending in $v$ and let $X$ be the set of all these points. We claim that $X$ meets every $A$-$B$-path. Let $S$ consist of the segment of each $P \in \mathcal{P}$ up to $x_P$.
	
	Assume for contradiction that there exists an $A$-$B$-path $Q$ avoiding $X$. Since $Q$ is not an alternating trail to a point of $B \setminus V(\mathcal{P})$ or $V(\mathcal{P}) \setminus S$, it meets $S$; let $y$ be its last point in it and $R$ the element of $\mathcal{P}$ containing $y$. Since $Q$ avoids $X$, $y \not= x_R$, so there exists a $z$ on $R$ after $y$ such that there is an alternating trail $f$ ending in $z$.
	Let $z'$ be the first point of $f$ on the segment of $R$ between $z$ and $y$ and let $f'$ be the concatenation of the segment of $f$ until $z'$ with the inverse of $R$ between $y$ and $z'$. Then $f'$ is an alternating trail ending in $y$. Since $f'$ meets $V(\mathcal{P})$ only in $S$ and $y$ is the last point of $Q$, the segment of $Q$ from $y$ can meet $f'$ only outside $V(\mathcal{P})$ and in $y$. If they only meet in $y$, let $f''$ be their concatenation. Otherwise, let $f''$ be the concatenation of $f'$ up to meeting $Q$ for the first time with $Q$ starting from that meeting point.
	
	Now $f''$ is alternating with respect to the set consisting of the segment of each $P$ up to $x_P$.
	But if $f''$ had a first vertex on the complementary segments, say on some path $P$, it would then be an alternating trail to that point, contradicting the choice of $x_{P}$.
\end{proof}
\begin{thm} \label{menger}
Let $\mathcal{G}$ be a dipath space, $A,B \subseteq V(\mathcal{G})$ and $k$ a natural number. Then either there exists a set of size less than $k$ meeting every $A$-$B$-path or a set of $k$ disjoint $A$-$B$ paths.
\end{thm}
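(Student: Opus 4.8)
The plan is to derive \cref{menger} as a clean dichotomy from the two preceding propositions, \cref{augment} and \cref{separator}, via a maximality argument in the style of the augmenting-path proof. The essential point is that these two propositions already split on exactly the relevant condition---whether there is an alternating trail ending in $B \setminus V(\mathcal{P})$---so the theorem reduces to choosing the right finite family $\mathcal{P}$ and reading off the appropriate conclusion.

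First I would assume that there is no set of $k$ disjoint $A$-$B$-paths and aim to produce a separator of size less than $k$; this suffices to prove the stated alternative. Under this assumption no family of disjoint $A$-$B$-paths can have $k$ or more members, since from such a family one could discard paths down to exactly $k$. Hence every family of disjoint $A$-$B$-paths is finite with at most $k-1$ members. As the occurring sizes form a nonempty bounded set of natural numbers (the empty family has size $0$), a family $\mathcal{P}$ of maximum size $m \le k-1$ exists. Being finite, $\mathcal{P}$ is eligible as the fixed finite set of disjoint $A$-$B$-paths from the setup, so \cref{augment} and \cref{separator} both apply to it.

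Next I would run the dichotomy. If there were an alternating trail with respect to $\mathcal{P}$ ending in $B \setminus V(\mathcal{P})$, then \cref{augment} would supply a set of disjoint $A$-$B$-paths of size strictly greater than $m$, contradicting the maximality of $\mathcal{P}$. Therefore no such alternating trail exists, and \cref{separator} yields a choice of one point from each path of $\mathcal{P}$ together meeting every $A$-$B$-path. Since the paths of $\mathcal{P}$ are pairwise disjoint these chosen points are distinct, so this separator has size exactly $m \le k-1 < k$, which is precisely the required set of size less than $k$ meeting every $A$-$B$-path.

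The substantive work has already been carried out inside \cref{augment}---where raylessness from \cref{symdiff} together with the structure result \cref{pathcircuit} turns the symmetric difference $\widehat{f} \triangle \widehat{\mathcal{P}}$ into genuine disjoint paths---and inside \cref{separator}; the theorem itself is essentially bookkeeping on top of them. The only point that genuinely needs checking is that a maximum-size disjoint family is attained, and this is immediate here because $k$ is a fixed natural number, bounding all relevant sizes by $k-1$. I expect no serious obstacle for finite $k$; the one place where difficulty could arise is if $k$ were an infinite cardinal, where attaining the supremum---or, equivalently, controlling the limit stages of an iterated augmentation---would become the crux, but this does not concern the present finite statement.
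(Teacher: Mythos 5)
Your proposal is correct and follows essentially the same route as the paper: take a maximum-size family $\mathcal{P}$ of disjoint $A$-$B$-paths (which exists because all sizes are bounded by $k-1$), rule out an augmenting alternating trail via \cref{augment} by maximality, and then extract the small separator from \cref{separator}. Your added remarks on the attainment of the maximum and the failure mode for infinite $k$ are sensible but do not change the argument.
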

\begin{proof}
Assume that there is no set of $k$ disjoint $A$-$B$-paths. Then there exists a set $\mathcal{P}$ of disjoint $A$-$B$-paths of maximal size. By \cref{augment} there is no trail alternating with respect to $\mathcal{P}$. But then there must be a set of size $|\mathcal{P}|<k$ meeting every $A$-$B$-path by \cref{separator}.
\end{proof}
While we have only looked at $A$-$B$-paths so far, the same proof will work if we replace all occurences of '$A$-$B$-paths' in \cref{menger} with 'paths from $A$ to $B$'.

\section{Counterexamples} \label{counterex}
Some natural-sounding alternate versions of \cref{menger} can easily be refuted by simple, known counterexamples. This is also shown in \cite{TV:Menger} with very similar examples originally given in \cite{DK:PCT} and \cite{Whyburn:Conn}.
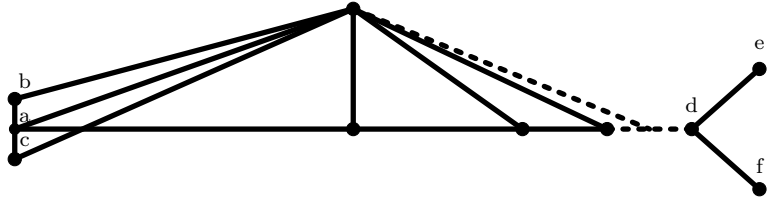
\begin{figure}
	\centering

\begin{tikzpicture}[line cap=round,line join=round,>=triangle 45,x=9cm,y=8cm]
\clip(-0.1,-0.2) rectangle (1.2,0.3);
\draw [line width=2pt] (0,0)-- (0.5,0);
\draw [line width=2pt] (0.5,0)-- (0.75,0);
\draw [line width=2pt] (0.75,0)-- (0.875,0);
\draw [line width=2pt,loosely dotted] (0.875,0)-- (1,0);
\draw [line width=2pt] (0,-0.05)-- (0,0);
\draw [line width=2pt] (0,0.05)-- (0,0);
\draw [line width=2pt] (0,0)-- (0.5,0.2);
\draw [line width=2pt] (0.5,0)-- (0.5,0.2);
\draw [line width=2pt] (0.75,0)-- (0.5,0.2);
\draw [line width=2pt] (1.1,0.1)-- (1,0);
\draw [line width=2pt] (1.1,-0.1)-- (1,0);
\draw [line width=2pt] (0.5,0.2)-- (0,-0.05);
\draw [line width=2pt] (0.5,0.2)-- (0,0.05);
\draw [line width=2pt] (0.5,0.2)-- (0.875,0);
\draw [line width=2pt,loosely dotted] (0.5,0.2)-- (0.9375,0);
\begin{scriptsize}
\draw [fill=black] (0,0) circle (2pt);
\draw[color=black] (0.015,0.02) node {a};5
\draw [fill=black] (0.5,0) circle (2.5pt);
\draw [fill=black] (0.75,0) circle (2.5pt);
\draw [fill=black] (1,0) circle (2.5pt);
\draw[color=black] (1,0.04) node {d};
\draw [fill=black] (0.875,0) circle (2.5pt);
\draw [fill=black] (0,-0.05) circle (2.5pt);
\draw[color=black] (0.015,-0.02) node {c};
\draw [fill=black] (0,0.05) circle (2.5pt);
\draw[color=black] (0.015,0.08) node {b};
\draw [fill=black] (0.5,0.2) circle (2.5pt);
\draw [fill=black] (1.1,0.1) circle (2.5pt);
\draw[color=black] (1.1,0.14) node {e};
\draw [fill=black] (1.1,-0.1) circle (2.5pt);
\draw[color=black] (1.1,-0.06) node {f};
\end{scriptsize}
\end{tikzpicture}
	\caption{A combined counterexample to three Menger variations}
	\label{figurecounter}
\end{figure}

\Cref{figurecounter} shows a combination of such examples: a dominated ray, where the first vertex $a$ and the end $d$ have two extra neighbors each ($b,c$ and $e,f$ respectively) and $b$ and $c$ are also adjacent to the dominating vertex.

In this example there is no single vertex except $a$ or $b$ meeting every path from $a$ to $b$, but there are not even two edge-disjoint paths between them. Thus a version of Menger for internally disjoint paths fails between $a$ and $b$, one for fans between $\{a,b\}$ and $d$ and one for edge-disjoint paths between $\{b,c\}$ and $\{e,f\}$.

One could also ask for a version for infinite cardinalities or even an Aharoni-Berger-type statement, but an example from \cite{TV:Menger}  shows that a cardinality version of \cref{menger} already fails for $\aleph_0$. For this consider a the space $[0,1]^2$ and take $A$ to be the points of the form $(0,\frac{1}{n})$ and $B$ those of the form $(1,\frac{n-1}{n})$. This example can also be modified to obtain a graph-like space. In an upcoming article (\cite{Heine:Ubi}), the author will build on this to obtain some negative ubiquity results.

\section*{Acknowledgement}
I thank Nathan Bowler for many helpful discussions and comments.

\clearpage
\printbibliography
\end{document}